\newcommand{\tmvolxx}{xx}
\newcommand{\tmyearyyyy}{yyyy}
\newcommand{\FirstPageHead}[3]{
{\footnotesize 
\vskip -8mm 
\centerline {Travaux math\'ematiques, \quad 
Volume #1 (#2), 
#3,\quad \copyright\  Universit\'e du Luxembourg}}\vspace{-3mm}}
\newtheorem{theorem}{Theorem}[section]
\newtheorem{lemma}[theorem]{Lemma}
\newtheorem{proposition}[theorem]{Proposition}
\newtheorem{corollary}[theorem]{Corollary}
\theoremstyle{remark}
\newtheorem{remark}[theorem]{Remark}
\numberwithin{equation}{section}
\begin{document}

\thispagestyle{empty}
\FirstPageHead{\tmvolxx}{\tmyearyyyy}{\pageref{firstpage}--\pageref{lastpage}}
\label{firstpage}
{\renewcommand{\thefootnote}{\fnsymbol{footnote}}
\setcounter{footnote}{0}
\footnotetext{
Higgs $G$--bundle, co--Higgs bundle, Yang--Mills--Higgs connection,
representation space, deformation retraction.}
\setcounter{footnote}{1}
}
{\renewcommand{\thefootnote}{\fnsymbol{footnote}}
\setcounter{footnote}{0}

\footnotetext{AMS Classification: 14F05, 14J32, 32L05, 53C07, 58E15}
\setcounter{footnote}{1}
}




\markboth{I. Biswas, U. Bruzzo, B. Gra\~na Otero, A. Lo Giudice}
{Yang--Mills--Higgs connections on Calabi--Yau manifolds, II}

$ $
\bigskip

$ $
\bigskip

\bigskip

\centerline{{\Large {Yang--Mills--Higgs
connections on Calabi--Yau manifolds, II}  }}

\bigskip
\bigskip
\centerline{{\large by  
Indranil Biswas, Ugo Bruzzo,
}}
\medskip
\centerline{{\large 
Beatriz Gra\~na Otero, and
Alessio Lo Giudice
}
\addtocounter{footnote}{-1}\footnote
{I.B. thanks Universit\'e Pierre et Marie Curie for hospitality.
He is supported by a J. C. Bose Fellowship. U.B.'s
research is partly supported by 
INdAM-GNSAGA. B.G.O. is supported by Project ID PRY 6777 of the Pontificia Universidad
Javeriana, Bogot\'a. U.B. is a member of VBAC.}
}

\vspace*{.7cm}

\begin{abstract}
In this paper we study Higgs and co--Higgs $G$--bundles on compact K\"ahler manifolds $X$. Our main 
results are:

\smallskip
\begin{enumerate}
\item If $X$ is Calabi--Yau (i.e., it has vanishing first Chern class), and $(E,\,\theta)$ is
a semistable Higgs or co--Higgs $G$--bundle on $X$, then the principal $G$--bundle $E$ is semistable.
In particular, there is a 
deformation retract of ${\mathcal M}_H(G)$ onto $\mathcal M(G)$, where $\mathcal M(G)$ is the moduli 
space of semistable principal $G$--bundles with vanishing rational Chern classes on $X$, and analogously, 
${\mathcal M}_H(G)$ is the moduli space of semistable principal Higgs $G$--bundles with vanishing 
rational Chern classes.

\smallskip
\item Calabi--Yau manifolds are characterized as those compact K\"ahler 
manifolds whose tangent bundle is semistable for every K\"ahler
class, and have the following property: if $(E,\,\theta)$ is a 
semistable Higgs or co--Higgs vector bundle, then $E$ is semistable.
\end{enumerate}
\end{abstract}

\pagestyle{myheadings}

\section{Introduction}

In our previous paper \cite{BBGL} we showed that the existence of semistable Higgs bundles with a nontrivial Higgs field
on a compact K\"ahler manifold $X$ constrains the geometry of $X$. In particular, it was shown that if $X$ is K\"ahler-Einstein with $c_1(TX)\ge 0$, then it is
necessarily Calabi-Yau, i.e., $c_1(TX)=0$. In this paper we extend the
analysis of the interplay between the existence of semistable Higgs bundles
and the geometry of the underlying manifold (actually, we shall also consider co-Higgs bundles, and allow the structure group of the bundle
to be any reductive linear algebraic group). Thus, 
if  $X$ is Calabi--Yau  and $(E,\,\theta)$ is
a semistable Higgs or co--Higgs $G$--bundle on $X$, 
it is proved that the underlying principal
$G$--bundle $E$ is semistable (Lemma \ref{lem2}).
This has a consequence on the topology of the moduli spaces of principal (Higgs) $G$-bundles having vanishing
rational Chern classes. We can indeed prove that   there is a 
deformation retract of ${\mathcal M}_H(G)$ onto $\mathcal M(G)$, where $\mathcal M(G)$ is the moduli 
space of semistable principal $G$--bundles with vanishing rational Chern classes, and analogously, 
${\mathcal M}_H(G)$ is the moduli space of semistable principal Higgs $G$--bundles with vanishing 
rational Chern classes (cf.~\cite{BF,FL} for similar deformation retract results).

As a further application, we can prove a characterization of Calabi--Yau manifolds in terms of
Higgs and co-Higgs bundles; the characterization in question says that if $X$ is a compact K\"ahler 
manifold with semistable tangent bundle with respect to every K\"ahler
class, having the following property: for any
semistable Higgs or co--Higgs vector bundle $(E,\,\theta)$  on $X$, the vector bundle $E$ is
semistable, then $X$ is Calabi-Yau (Theorem \ref{prop5}).
 
In Section \ref{se4}
We give a result about the behavior of semistable Higgs bundles under pullback 
by finite morphisms of K\"ahler manifolds. Let $(X,\, \omega)$ be a Ricci--flat
compact K\"ahler manifold, $M$ a compact connected K\"ahler manifold,
and
$$
f\, :\, M\, \longrightarrow\, X
$$
a surjective holomorphic map such that each fiber of $f$ is a finite subset of $M$.
Let $(E_G,\, \theta)$ be a Higgs $G$--bundle on $X$ such that the pulled back
Higgs $G$--bundle $(f^*E_G,\, f^*\theta)$ on $M$ is semistable (respectively,
stable). Then the principal $G$--bundle $f^*E_G$ is semistable (respectively,
polystable).

\section{Preliminaries}

Let $X$ be a compact connected K\"ahler manifold equipped with a K\"ahler form $\omega$. 
Using $\omega$, the degree of torsion-free coherent analytic sheaves on $X$ is defined
as follows:
$$
\text{degree}(F)\, :=\, \int_X c_1(F)\bigwedge \omega^{d-1}\,\in\, \mathbb R\, ,
$$
where $d\,=\, \dim_{\mathbb C}X$. The holomorphic cotangent bundle of $X$ will be denoted by $\Omega_X$.

Let $G$ be a connected reductive affine algebraic group
defined over $\mathbb C$. The connected component of the center of $G$ containing the
identity element will be denoted by $Z_0(G)$.
The Lie algebra of $G$ will be denoted by $\mathfrak g$.
A Zariski closed connected subgroup $P\, \subseteq\, G$ is called a parabolic subgroup
of $G$ if $G/P$ is a projective variety. The unipotent radical of a parabolic subgroup
$P$ will be denoted by $R_u(P)$. A Levi subgroup of a parabolic subgroup
$P$ is a Zariski closed subgroup $L(P)\, \subset\, P$ such that the composition
$$
L(P)\, \hookrightarrow\, P\, \longrightarrow\, P/R_u(P)
$$
is an isomorphism. Levi subgroups exist,
and any two Levi subgroups of $P$ differ by an inner automorphism of $P$ \cite[\S~11.22, 
p.~158]{Bo}, \cite[\S~30.2, p.~184]{Hu2}. The quotient map
$G\, \longrightarrow\, G/P$ defines a principal $P$--bundle on $G/P$.
The holomorphic line bundle on $G/P$ associated to this principal $P$--bundle for a
character $\chi$ of $P$ will be denoted by $G(\chi)$.
A character $\chi$ of a parabolic subgroup $P$ 
is called \textit{strictly anti--dominant} if $\chi\vert_{Z_0(G)}$ is trivial, and the 
associated holomorphic line bundle on $G(\chi)\, \longrightarrow\, G/P$ is ample.

For a principal $G$--bundle $E_G$ on $X$, the vector bundle
$$
\text{ad}(E_G)\, :=\, E_G\times^G{\mathfrak g} \, \longrightarrow\, X
$$
associated to $E_G$ for the adjoint action of $G$ on
its Lie algebra $\mathfrak g$ is called the
\textit{adjoint bundle} for $E_G$. So the fibers of $\text{ad}(E_G)$ are Lie algebras
identified with $\mathfrak g$ up to inner automorphisms.
Using the Lie algebra structure of the fibers of $\text{ad}(E_G)$ and the
exterior multiplication of differential forms we have a symmetric bilinear pairing
$$
(\text{ad}(E_G)\otimes\Omega_X)\times (\text{ad}(E_G)\otimes\Omega_X)\,\longrightarrow\,
\text{ad}(E_G)\otimes\Omega^2_X
$$
which will be denoted by $\bigwedge$.

A \textit{Higgs field} on a holomorphic principal $G$--bundle $E_G$ on $X$ is a holomorphic
section $\theta$ of $\text{ad}(E_G)\otimes\Omega_X$ such that
\begin{equation}\label{e1}
\theta\bigwedge\theta \,=\, 0\, .
\end{equation}
A \textit{Higgs} $G$--{\it bundle} on $X$ is a pair of the form $(E_G,\, \theta)$, where
$E_G$ is holomorphic principal $G$--bundle on $X$ and 
$\theta$ is a Higgs field on $E_G$. A Higgs $G$--bundle $(E_G,\, \theta)$ is called
\textit{stable} (respectively, \textit{semistable}) if for every quadruple of the form
$(U,\, P,\, \chi,\,  E_P)$, where
\begin{itemize}
\item $U\, \subset\, X$ is a dense open subset such that the
complement $X\setminus U$ is a complex
analytic subset of $X$ of complex codimension at least two,

\item $P\, \subset\, G$ is a proper parabolic
subgroup,

\item $\chi$ is a strictly anti--dominant character of $P$, and

\item $E_P\, \subset\, E_G\vert_U$
is a holomorphic reduction of structure group to $P$ over $U$
such that $\theta\vert_U$ is a section of $\text{ad}(E_P)\otimes\Omega_U$,
\end{itemize}
the following holds:
$$
\text{degree}(E_P\times^\chi \mathbb C)\, >\, 0
$$
(respectively, $\text{degree}(E_P\times^\chi \mathbb C)\, \geq \, 0$);
note that since $X\setminus U$ is a complex
analytic subset of $X$ of complex codimension at least two, the line bundle
$E_P\times^\chi \mathbb C$ on $U$ extends uniquely to a holomorphic line bundle on $X$.

A semistable Higgs $G$--bundle $(E_G,\, \theta)$ is called
\textit{polystable} if there is a Levi subgroup $L(Q)$ of a parabolic subgroup $Q\,
\subset\, G$ and a Higgs $L(Q)$--bundle $(E',\, \theta')$ on $X$ such that
\begin{itemize}
\item the Higgs $G$--bundle obtained by extending the structure group of $(E',\, \theta')$
using the inclusion $L(Q)\, \hookrightarrow\, G$ is isomorphic to $(E_G,\, \theta)$, and

\item the Higgs $L(Q)$--bundle $(E',\, \theta')$ is stable.
\end{itemize}

Fix a maximal compact subgroup $K\, \subset\, G$. Given a holomorphic principal $G$--bundle
$E_G$ and a $C^\infty$ reduction of structure group $E_K\, \subset\, E_G$ to
the subgroup $K$, there is a unique connection on the principal $K$--bundle
$E_K$ that is compatible with the holomorphic structure of $E_G$
\cite[pp.~191--192, Proposition~5]{At}; it is known as the \textit{Chern connection}.
A $C^\infty$ reduction of structure group of $E_G$ to $K$ is called a
\textit{Hermitian structure} on $E_G$.

Let $\Lambda_\omega$ denote the adjoint of multiplication of differential forms on $X$
by $\omega$.

Given a Higgs $G$--bundle $(E_G,\, \theta)$ on $X$, a Hermitian structure $E_K\,\subset\,
E_G$ is said to satisfy the Yang--Mills--Higgs equation if
\begin{equation}\label{ymh}
\Lambda_\omega ({\mathcal K}+ \theta\bigwedge\theta^*) \,=\, {\mathfrak z}\, ,
\end{equation}
where $\mathcal K$ is the curvature of the Chern connection
associated to $E_K$ and $\mathfrak z$ is some
element of the Lie algebra of $Z_0(G)$; the adjoint $\theta^*$ is
constructed using the Hermitian structure $E_K$. A Higgs $G$--bundle admits a Hermitian
structure satisfying the Yang--Mills--Higgs equation if and only if it is polystable
\cite{Si}, \cite[p. 554, Theorem 4.6]{BS}.

Given a polystable Higgs $G$--bundle $(E,\, \theta)$, any two Hermitian structures on
$E_G$ satisfying the Yang--Mills--Higgs equation differ by a holomorphic automorphism of
$E_G$ that preserves $\theta$; however, the associated Chern connection is unique
\cite[p. 554, Theorem 4.6]{BS}.

\section{Higgs $G$--bundles on Calabi--Yau manifolds}

Henceforth, till the end of Section \ref{se4}, we assume that 
$c_1(TX)\,\in\,H^2(X, \, {\mathbb Q})$ is zero. From this assumption
it follows that every K\"ahler class on $X$ contains
a Ricci--flat K\"ahler metric \cite[p. 364, Theorem 2]{Ya}. We will assume that the
K\"ahler form $\omega$ on $X$ is Ricci--flat.

\subsection{Higgs $G$--bundles on Calabi-Yau manifolds}

Let $(E_G,\, \theta)$ be a polystable Higgs $G$--bundle on $X$. For any
holomorphic tangent vector $v\, \in\, T_xX$, note that $\theta(x)$ is an
element of the fiber $\text{ad}(E_G)_x$. For any point
$x\, \in\, X$, consider the complex subspace
\begin{equation}\label{ht}
\widehat{\Theta}_x\, :=\, \{\theta(x)(v)\, \mid\, v\, \in \, T_xX\}\, \subset\,
\text{ad}(E_G)_x\, .
\end{equation}
{}Form \eqref{e1} it follows immediately that $\widehat{\Theta}_x$ is an abelian
subalgebra of the Lie algebra $\text{ad}(E_G)_x$.

Let $\nabla$ be the connection on $\text{ad}(E_G)$ induced by the unique
connection on $E_G$ given by the solutions of the Yang--Mills--Higgs equation.

\begin{lemma}\label{lem1}
\mbox{}
\begin{enumerate}
\item The abelian subalgebra $\widehat{\Theta}_x\, \subset\,{\rm ad}(E_G)_x$ is semisimple.

\item $\{\widehat{\Theta}_x\}_{x\in X}\,\subset\, {\rm ad}(E_G)$ 
is preserved by the connection $\nabla$ on ${\rm ad}(E_G)$. In particular,
$$
\{\widehat{\Theta}_x\}_{x\in X}\,\subset\, {\rm ad}(E_G)
$$
is a holomorphic subbundle.
\end{enumerate}
\end{lemma}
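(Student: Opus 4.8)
The plan is to exploit that a polystable Higgs $G$--bundle carries a Hermitian structure $E_K$ solving the Yang--Mills--Higgs equation \eqref{ymh}, and to derive both assertions from the single fact that, on the Ricci--flat manifold $X$, the Higgs field satisfies $\nabla\theta\,=\,0$ and $[\theta_i,\theta_j^*]\,=\,0$ for all $i,j$ (in a local unitary coframe $\theta\,=\,\sum_i\theta_i\,dz^i$). I would regard $\theta$ as a holomorphic section of the Hermitian holomorphic bundle $\mathcal E\,:=\,\text{ad}(E_G)\otimes\Omega_X$, equipped with the connection obtained by tensoring $\nabla$ on $\text{ad}(E_G)$ with the Chern (Levi--Civita) connection of the K\"ahler metric on $\Omega_X$. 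The engine is the Bochner--Kodaira identity for the holomorphic section $\theta$; after integrating over the compact manifold $X$ it reads
$$
\int_X\|\nabla^{1,0}\theta\|^2 \,=\, \int_X\langle\sqrt{-1}\,\Lambda_\omega F_{\mathcal E}\,\theta,\,\theta\rangle\, ,
$$
where $F_{\mathcal E}$ is the curvature of $\mathcal E$ and $\langle\cdot,\cdot\rangle$ is the fibrewise Hermitian pairing built from $E_K$ and an invariant form on $\mathfrak g$.

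Next I would evaluate the right--hand side. Since $F_{\mathcal E}\,=\,F_{\text{ad}(E_G)}\otimes 1+1\otimes F_{\Omega_X}$ and $\omega$ is Ricci--flat, the contracted cotangent curvature $\sqrt{-1}\,\Lambda_\omega F_{\Omega_X}$ is the Ricci endomorphism and hence vanishes, so only the $\text{ad}(E_G)$ curvature survives; acting on $\text{ad}(E_G)$ it is the adjoint action of $K\,:=\,\sqrt{-1}\,\Lambda_\omega\mathcal K$. The equation \eqref{ymh} expresses $K$ through $\theta\bigwedge\theta^*$ and the central element $\mathfrak z$; because $\mathfrak z\,\in\,\text{Lie}(Z_0(G))$ is central, its bracket against $\theta$ drops out, leaving only the contribution of $\sum_i[\theta_i,\theta_i^*]$. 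A short manipulation with the Jacobi identity, using that $\widehat\Theta_x$ is abelian so that $[\theta_i,\theta_j]\,=\,0$, then identifies the integrand, up to a positive constant, with $-\sum_{i,j}\|[\theta_i,\theta_j^*]\|^2\,\le\,0$. Comparing this with the left--hand side, which is manifestly $\ge 0$, forces both to vanish identically, giving $\nabla^{1,0}\theta\,=\,0$ and $[\theta_i,\theta_j^*]\,=\,0$ at every point.

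Finally I would read off the two statements. From $[\theta_i,\theta_j^*]\,=\,0$ together with $[\theta_i,\theta_j]\,=\,0$ one obtains a pairwise commuting family of elements of $\text{ad}(E_G)_x\cong\mathfrak g$ each of which is normal for the Hermitian adjoint $*$; since $E_K$ makes $*$ the Cartan involution of the maximal compact $K$, every normal element is semisimple, and the commuting semisimple family spans a toral, hence semisimple, subalgebra, which is assertion (1). For assertion (2), combine $\nabla^{1,0}\theta\,=\,0$ with $\bar\partial\theta\,=\,0$ (holomorphicity) to get $\nabla\theta\,=\,0$, so $\theta$ is parallel as a section of $\mathcal E$; parallel transport by $\nabla$ then carries $\widehat\Theta_{x_0}\,=\,\text{image}\,\theta(x_0)$ onto $\widehat\Theta_{x_1}$, the rank of $\theta(x)$ is constant, and $\{\widehat\Theta_x\}$ is a $\nabla$--invariant subbundle, holomorphic because $\theta$ is a holomorphic bundle map of constant rank. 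The main obstacle is the middle step: pinning down the sign of the curvature integrand — checking that Ricci--flatness kills the $\Omega_X$ term and that the Yang--Mills--Higgs substitution yields exactly the nonpositive quantity $-\sum_{i,j}\|[\theta_i,\theta_j^*]\|^2$ — since it is this sign that makes the Bochner inequality collapse; the subsequent Lie--theoretic step needs only that $E_K$ provide a compact real form fibrewise.
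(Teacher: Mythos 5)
Your argument is correct, but it takes a genuinely different (and more self-contained) route than the paper's. The paper's proof is a reduction plus two citations: it first treats $G=\mathrm{GL}(n,\mathbb C)$, importing from \cite{BBGL} that the Higgs field of a polystable Higgs bundle on a Ricci--flat K\"ahler manifold is parallel (Proposition 2.2 there) and that the endomorphisms $\theta(x)(v)$ are simultaneously diagonalizable (Proposition 2.5 there), and then handles general $G$ through a representation $\rho\colon G\to\mathrm{GL}(N,\mathbb C)$ with $\rho(Z_0(G))$ central. You instead re-derive exactly these two inputs by running the Bochner--Kodaira identity for $\theta$ as a holomorphic section of $\mathrm{ad}(E_G)\otimes\Omega_X$: Ricci--flatness kills the $\Omega_X$ curvature term, \eqref{ymh} reduces the remaining term to $-\|\sum_i[\theta_i,\theta_i^*]\|^2$, and the Jacobi identity with $[\theta_i,\theta_j]=0$ converts this to $-\sum_{i,j}\|[\theta_i,\theta_j^*]\|^2$; the signs and trace manipulations do check out, so $\nabla\theta=0$ and $[\theta_i,\theta_j^*]=0$ follow. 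Your Lie-theoretic step (commuting elements normal for the compact form are simultaneously semisimple, so $\widehat\Theta_x$ is toral) and the parallel-transport argument for part (2) --- which has the merit of making the constant-rank issue explicit, something the paper glosses over --- are both sound. What your route buys is independence from \cite{BBGL} and from the $\mathrm{GL}(N)$ reduction; the small price is that for general reductive $\mathfrak g$ you should say explicitly that an $\mathrm{Ad}$-invariant form exists whose Hermitian pairing via $E_K$ is positive definite, which is what makes your integrand estimates legitimate on $\mathrm{ad}(E_G)$.
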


\begin{proof}
First take $G\,=\, \text{GL}(n, {\mathbb C})$, so that $(E_G,\, \theta)$ defines a
Higgs vector bundle $(F,\, \varphi)$ of rank $n$. Let
$$
\widehat{\Theta}'_x\, \subset\, \text{End}(F_x)
$$
be the subalgebra constructed as in \eqref{ht} for the Higgs
vector bundle $(F,\, \varphi)$. From
\cite[Proposition 2.5]{BBGL} it follows immediately that there is a basis of the vector
space $F_x$ such that $$\varphi(x)(v)\, \in\, \text{End}(F_x)$$ is diagonal with respect
to it for all $v\,\in\, T_x$. This implies that the subalgebra $\widehat{\Theta}'_x$ is
semisimple (this uses the Jordan--Chevalley decomposition, see e.g.\ \cite[Ch.\ 2]{Hu1}).

Consider the ${\mathcal O}_X$--linear homomorphism
$$
\eta\, :\, TX\, \longrightarrow\, \text{End}(F)
$$
that sends any $w\, \in\, T_yX$ to $\varphi(y)(w)\, \in\, \text{End}(F_y)$, where
$\varphi$ as before is the Higgs field on the holomorphic vector bundle $F$. Proposition
2.2 of \cite{BBGL} says that $\varphi$ is flat with respect to the connection on
$\text{End}(F)\otimes\Omega_X$ induced by the connection $\nabla$ on $\text{End}(F)
\,=\, \text{ad}(E_G)$ and the Levi--Civita connection on $\Omega_X$ for
$\omega$. Therefore, the above homomorphism $\eta$ intertwines the Levi--Civita connection
on $TX$ and the connection on $\text{End}(F)$. Consequently, the image $\eta(TX)\,\subset\,
\text{End}(F)$ is preserved by the connection on $\text{End}(F)$. On the other hand,
$\eta(TX)$ coincides with $\{\widehat{\Theta}'_x\}_{x\in X}\,\subset\, \text{End}(F)$.

Therefore, the lemma is proved when $G\,=\, \text{GL}(n, {\mathbb C})$. 

For a general $G$, take any homomorphism
$$
\rho\, :\, G\,\longrightarrow\, \text{GL}(N, {\mathbb C})
$$ 
such that $\rho(Z_0(G))$ lies inside the center of $\text{GL}(N, {\mathbb C})$. Let
$(E_\rho,\, \theta_\rho)$ be the Higgs vector bundle of rank $N$ given by
$(E_G,\, \theta)$ using $\rho$. For any Hermitian structure on $E_G$ solving the
Yang--Mills--Higgs equation for $(E_G,\, \theta)$, the induced Hermitian structure
on $E_\rho$ solves the Yang--Mills--Higgs equation for $(E_\rho,\, \theta_\rho)$.
We have shown above that the lemma holds for $(E_\rho,\, \theta_\rho)$.

Since the lemma holds for $(E_\rho,\, \theta_\rho)$ for every $\rho$ of the above type,
we conclude that the lemma holds for $(E_G,\, \theta)$.
\end{proof}

As before, $(E_G,\, \theta)$ is a polystable Higgs $G$--bundle on $X$. Fix
a Hermitian structure
\begin{equation}\label{ek}
E_K\, \subset\, E_G
\end{equation}
that satisfies the Yang--Mills--Higgs equation for $(E_G,\, \theta)$.

Take another
Higgs field $\beta$ on $E_G$. Let
$$
\widetilde{\beta}\, :\, TX\, \longrightarrow\, \text{ad}(E_G)
$$
be the ${\mathcal O}_X$--linear homomorphism that sends any tangent
vector $w\, \in\, T_yX$ to $$\beta(y)(w)\, \in\, \text{ad}(E_G)_y\, .$$

\begin{theorem}\label{thm1}
Assume that the image $\widetilde{\beta}(TX)$ is contained in the subbundle
$$\{\widehat{\Theta}_x\}_{x\in X}\,\subset\, {\rm ad}(E_G)$$ in Lemma \ref{lem1}.
Then $E_K$ in \eqref{ek} also satisfies the Yang--Mills--Higgs equation for
$(E_G,\, \beta)$. In particular, $(E_G,\, \beta)$ is polystable.
\end{theorem}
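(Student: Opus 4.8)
The plan is to exploit that the curvature $\mathcal K$ of the Chern connection is determined by the Hermitian structure $E_K$ alone, so it is the \emph{same} in the Yang--Mills--Higgs equation \eqref{ymh} for $(E_G,\theta)$ and for $(E_G,\beta)$. Comparing the two instances of \eqref{ymh} therefore reduces the theorem to the single assertion that
$$\theta\bigwedge\theta^*\,=\,0\qquad\text{and}\qquad\beta\bigwedge\beta^*\,=\,0\,,$$
for then \eqref{ymh} for $(E_G,\theta)$ collapses to $\Lambda_\omega\mathcal K=\mathfrak z$, and the identical equation is exactly \eqref{ymh} for $(E_G,\beta)$ with the same central element $\mathfrak z$. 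As in the proof of Lemma \ref{lem1}, I would first reduce to $G=\mathrm{GL}(n,\mathbb C)$, so that $(E_G,\theta)$ is a Higgs vector bundle $(F,\varphi)$ and the brackets above become honest matrix commutators in a local frame; the passage back to general $G$ is deferred to the end.

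The crux---and the step I expect to be the main obstacle---is the vanishing $\varphi\bigwedge\varphi^*=0$ for the Yang--Mills--Higgs metric on the Ricci--flat manifold $X$. Writing $\varphi=\sum_i\varphi_i\,dz^i$ in a local holomorphic frame, this amounts to $[\varphi_i,\varphi_j^*]=0$ for all $i,j$, i.e.\ to the assertion that the commuting family $\{\varphi_i\}$ (commuting by \eqref{e1}) is \emph{normal} for $E_K$, equivalently that the simultaneous eigenframe furnished by \cite[Proposition 2.5]{BBGL} can be chosen $E_K$--orthonormal. I would obtain this from the parallelism of $\varphi$: by \cite[Proposition 2.2]{BBGL} one has $\nabla\varphi=0$, and since the Chern connection is unitary also $\nabla\varphi^*=0$. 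Feeding these into the Bochner--Weitzenb\"ock identity for $|\varphi|^2$ on the compact K\"ahler manifold $X$, the Ricci term drops out because $\omega$ is Ricci--flat while the mean--curvature contribution is killed by \eqref{ymh} (its right--hand side $\mathfrak z$ being central); integration over $X$ should then yield
$$\int_X\bigl(|\nabla\varphi|^2+|[\varphi,\varphi^*]|^2\bigr)\,\frac{\omega^d}{d!}\,=\,0\,,$$
which simultaneously reproves $\nabla\varphi=0$ and gives the desired $[\varphi,\varphi^*]=0$. (Equivalently, one may argue that the polystable Yang--Mills--Higgs metric splits $F$ orthogonally into its stable Higgs summands, forcing the $\varphi$--eigenbundles to be mutually $E_K$--orthogonal.)

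Granting $\varphi\bigwedge\varphi^*=0$, the remainder is linear algebra. The family $\{\varphi_i\}$ now commutes both with itself and with the adjoints $\{\varphi_j^*\}$, so it is simultaneously unitarily diagonalizable; in an $E_K$--orthonormal frame adapted to this diagonalization, every element of $\widehat{\Theta}=\mathrm{span}\{\varphi_i\}$ is a diagonal matrix, and its $E_K$--adjoint is again diagonal. By hypothesis $\widetilde{\beta}(TX)\subseteq\{\widehat{\Theta}_x\}$, so each component $\beta_i$ is diagonal in this same frame; hence $\beta_j^*$ is diagonal and $[\beta_i,\beta_j^*]=0$, giving $\beta\bigwedge\beta^*=0$. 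Therefore $\Lambda_\omega(\mathcal K+\beta\bigwedge\beta^*)=\Lambda_\omega\mathcal K=\mathfrak z$, i.e.\ $E_K$ satisfies \eqref{ymh} for $(E_G,\beta)$ with the same $\mathfrak z$.

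Finally, I would return to general $G$ as in Lemma \ref{lem1}: the identity $\beta\bigwedge\beta^*=0$ is pointwise and algebraic (it is the vanishing of the commutators $[\beta_i,\beta_j^*]$), so it can be checked after applying the representations $\rho$ with $\rho(Z_0(G))$ central used there, for which it has just been established; consequently $\Lambda_\omega\mathcal K=\mathfrak z$ is exactly \eqref{ymh} for $(E_G,\beta)$. Since $(E_G,\beta)$ thus admits a Hermitian structure satisfying the Yang--Mills--Higgs equation, it is polystable by \cite{Si}, \cite[p.~554, Theorem~4.6]{BS}, which is the last assertion.
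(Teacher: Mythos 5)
Your proof is correct and follows essentially the same route as the paper: both reduce the theorem to showing $\beta\bigwedge\beta^*=0$, using that $E_K$ already solves the equation for $(E_G,0)$ (which rests on $\theta\bigwedge\theta^*=0$ and $\nabla\theta=0$ from \cite{BBGL}, facts you re-derive via Bochner rather than cite). Your simultaneous unitary diagonalization of the commuting normal family $\{\varphi_i\}$ is just a frame-level restatement of the paper's observation that $\{\widehat{\Theta}_x\}_{x\in X}+\gamma(TX)$ is an abelian subalgebra bundle containing the values of both $\beta$ and $\beta^*$.
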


\begin{proof}
{}From Theorem 4.2 of \cite{BBGL} we know that $E_K$ in \eqref{ek}
satisfies the Yang--Mills--Higgs equation for
$(E_G,\, 0)$. Therefore, it suffices to show that $\beta\bigwedge\beta^*\,=\,0$
(see \eqref{ymh}).

Let
$$
\gamma\, :\, TX\, \longrightarrow\, \text{ad}(E_G)
$$
be the $C^\infty(X)$--linear homomorphism that sends any $w\, \in \, T_yX$ to
$\theta^*(y)(w)\, \in\, \text{ad}(E_G)_y$. Clearly, we have
\begin{equation}\label{i1}
\gamma(TX)^*\,=\, \{\widehat{\Theta}_x\}_{x\in X}\, ;
\end{equation}
as before, the superscript ``$*$'' denotes adjoint with respect to the reduction
$E_K$. Since the subbundle $\{\widehat{\Theta}_x\}_{x\in X}$ is preserved by the
connection on $\text{ad}(E_G)$, from \eqref{i1} it follows that
\begin{equation}\label{i2}
\{\widehat{\Theta}_x\}_{x\in X} + \gamma(TX)\, \subset\, \text{ad}(E_G)
\end{equation}
is a subbundle preserved by the connection; it should be clarified that the above
need not be a direct sum.

We know that $\theta\bigwedge\theta^*\,=\, 0$ \cite[Lemma 4.1]{BBGL}. This and
\eqref{e1} together imply that the subbundle in \eqref{i2} is an abelian subalgebra
bundle. We have $$\widetilde{\beta}(TX)\, \subset\, \{\widehat{\Theta}_x\}_{x\in X}\, ,$$
and hence $\beta^*$ is a section of $\gamma(TX)\otimes\Omega_X\, \subset\,
\text{ad}(E_G)\otimes \Omega_X$. Since the subbundle in \eqref{i2} is an abelian
subalgebra bundle, we now conclude that $\beta\bigwedge\beta^*\,=\, 0$.
\end{proof}

The proof of Theorem \ref{thm1} gives the following:

\begin{corollary}\label{cor1}
Let $\phi$ be a Higgs field on $E_G$ such that $\phi\bigwedge\phi^*\,=\, 0$. 
Then $E_K$ in \eqref{ek} also satisfies the Yang--Mills--Higgs equation for
$(E_G,\, \phi)$. In particular, $(E_G,\, \phi)$ is polystable.
\end{corollary}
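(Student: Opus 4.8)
The plan is to observe that the hypothesis $\phi\bigwedge\phi^*\,=\,0$ is precisely the conclusion that the bulk of the proof of Theorem \ref{thm1} was devoted to establishing, so here it is granted for free and the argument collapses to the last line of that proof. Concretely, I would first invoke Theorem~4.2 of \cite{BBGL} to record that the fixed Hermitian structure $E_K$ of \eqref{ek} satisfies the Yang--Mills--Higgs equation for the Higgs $G$--bundle $(E_G,\,0)$ with vanishing Higgs field; that is, $\Lambda_\omega\mathcal K\,=\,\mathfrak z$ for the curvature $\mathcal K$ of the Chern connection of $E_K$ and some element $\mathfrak z$ in the Lie algebra of $Z_0(G)$.

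Next I would write down the Yang--Mills--Higgs equation \eqref{ymh} for the pair $(E_G,\,\phi)$, namely $\Lambda_\omega(\mathcal K+\phi\bigwedge\phi^*)\,=\,\mathfrak z$, and use $\mathbb C$--linearity of $\Lambda_\omega$ to split the left--hand side as $\Lambda_\omega\mathcal K+\Lambda_\omega(\phi\bigwedge\phi^*)$. By the standing assumption $\phi\bigwedge\phi^*\,=\,0$ the second term vanishes identically, so the left--hand side reduces to $\Lambda_\omega\mathcal K$, which by the previous step already equals the central element $\mathfrak z$. Hence $E_K$ satisfies \eqref{ymh} for $(E_G,\,\phi)$ with the same $\mathfrak z$, and no further computation is required.

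Finally, the existence of a Hermitian structure solving the Yang--Mills--Higgs equation is equivalent to polystability by \cite{Si} and \cite[p.~554, Theorem~4.6]{BS}; applying this characterization to $(E_G,\,\phi)$ yields that it is polystable, which completes the argument.

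There is no genuine obstacle here: the statement is an immediate corollary precisely because the only nontrivial input of Theorem \ref{thm1}---the identity $\beta\bigwedge\beta^*\,=\,0$, derived there from the abelian subalgebra bundle structure of \eqref{i2}---is now supplied as a hypothesis. The one point worth checking is that the adjoint $\phi^*$ entering $\phi\bigwedge\phi^*$ is formed with respect to the \emph{same} reduction $E_K$, so that the Yang--Mills--Higgs equations for $(E_G,\,0)$ and for $(E_G,\,\phi)$ refer to a common Chern connection and can therefore be compared termwise; this is already built into the setup fixed around \eqref{ek}.
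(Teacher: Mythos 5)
Your proposal is correct and follows exactly the paper's own argument: the paper likewise notes that $E_K$ solves the Yang--Mills--Higgs equation for $(E_G,\,0)$ (via Theorem~4.2 of \cite{BBGL}, as recorded in the proof of Theorem~\ref{thm1}) and then observes that the hypothesis $\phi\bigwedge\phi^*\,=\,0$ makes the equation for $(E_G,\,\phi)$ identical, with polystability following from \cite[p.~554, Theorem~4.6]{BS}. Your added remark that $\phi^*$ must be taken with respect to the same reduction $E_K$ is a sensible precision consistent with Remark~\ref{rem1}.
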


\begin{proof}
As noted in the proof of Theorem \ref{thm1}, the reduction $E_K$ satisfies the
Yang--Mills--Higgs equation for $(E_G,\, 0)$. Since $\phi\bigwedge\phi^*\,=\, 0$, it
follows that $E_K$ in \eqref{ek} satisfies the Yang--Mills--Higgs equation
for $(E_G,\, \phi)$.
\end{proof}

\begin{remark}\label{rem1}
The condition in Theorem \ref{thm1} that $\widetilde{\beta}(TX)\, \subset\, 
\{\widehat{\Theta}_x\}_{x\in X}$ does not depend on the Hermitian structure $E_K$; it 
depends only on the Higgs $G$--bundle $(E_G,\, \theta)$. In contrast, the condition 
$\phi\bigwedge\phi^*\,=\, 0$ in Corollary \ref{cor1} depends also on $E_K$.
\end{remark}

\subsection{A deformation retraction}

Let ${\mathcal M}_H(G)$ denote the moduli space of semistable Higgs $G$--bundles $(E_G,\,
\theta)$ on $X$ such that all  rational characteristic classes of $E_G$ of positive
degree vanish. It is known (it is a straightforward consequence of Theorem 2 in \cite{Si}) that if the following three conditions hold:
\begin{enumerate}
\item $(E_G,\, \theta)$ is semistable,

\item for all characters $\chi$ of $G$, the line bundle on $X$ associated to
$E_G$ for $\chi$ is of degree zero, and

\item the second Chern class $c_2(\text{ad}(E_G))\,\in\, H^4(X,\, {\mathbb Q})$
vanishes,
\end{enumerate}
then all characteristic classes of $E_G$ of positive degree vanish. Let
${\mathcal M}(G)$ denote the moduli space of semistable principal $G$--bundles $E_G$ on
$X$ such that all   rational characteristic classes of $E_G$ of positive degree vanish.

We have an inclusion
\begin{equation}\label{xi}
\xi\, :\, {\mathcal M}(G)\, \longrightarrow\, {\mathcal M}_H(G)\, ,\ \
E_G\, \longmapsto\, (E_G,\, 0)\, .
\end{equation}

\begin{proposition}\label{prop3}
There is a natural holomorphic deformation retraction of ${\mathcal M}_H(G)$ to the image
of $\xi$ in \eqref{xi}.
\end{proposition}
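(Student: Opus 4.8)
The plan is to realize the retraction as the restriction to $[0,1]\subset\mathbb C$ of the canonical holomorphic $\mathbb C$--action on $\mathcal M_H(G)$ that rescales the Higgs field, and to use Theorem \ref{thm1} to guarantee that this action preserves $\mathcal M_H(G)$ and carries the central fibre into the image of $\xi$ in \eqref{xi}. Concretely, for $\lambda\in\mathbb C$ I would consider the assignment
$$
R_\lambda\,:\,\mathcal M_H(G)\,\longrightarrow\,\mathcal M_H(G)\,,\qquad
[(E_G,\,\theta)]\,\longmapsto\,[(E_G,\,\lambda\theta)]\,,
$$
observing first that $(\lambda\theta)\bigwedge(\lambda\theta)\,=\,\lambda^2\,\theta\bigwedge\theta\,=\,0$, so $\lambda\theta$ is again a Higgs field, and that the rational characteristic classes of $E_G$ are unaffected by the scaling. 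The first task is then to check that $R_\lambda$ is well defined, i.e.\ that $(E_G,\,\lambda\theta)$ again lies in $\mathcal M_H(G)$ and that the construction descends to $S$--equivalence classes.

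For well-definedness I would represent each point of $\mathcal M_H(G)$ by its polystable representative $(E_G,\,\theta)$, unique up to isomorphism, and fix a Hermitian structure $E_K\,\subset\,E_G$ satisfying the Yang--Mills--Higgs equation. Taking $\beta\,=\,\lambda\theta$ one has $\widetilde\beta(TX)\,=\,\lambda\,\{\widehat\Theta_x\}_{x\in X}\,\subseteq\,\{\widehat\Theta_x\}_{x\in X}$ by \eqref{ht}, the subbundle being stable under scalar multiplication, so Theorem \ref{thm1} applies and shows that the \emph{same} $E_K$ satisfies the Yang--Mills--Higgs equation for $(E_G,\,\lambda\theta)$; in particular $(E_G,\,\lambda\theta)$ is polystable for every $\lambda$, including $\lambda\,=\,0$. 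Since neither the underlying bundle nor the reduction $E_K$ varies with $\lambda$, the assignment $\lambda\mapsto(E_G,\,\lambda\theta)$ is a genuine holomorphic family of polystable Higgs $G$--bundles over $\mathbb C$. Uniqueness of the polystable representative then shows that $R_\lambda$ is independent of the chosen representative, and the joint holomorphicity of $R:\mathbb C\times\mathcal M_H(G)\to\mathcal M_H(G)$ should follow from the algebraic structure of Simpson's moduli space together with functoriality applied to this tautological rescaling family.

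Next I would identify the central fibre and verify the retraction axioms. At $\lambda\,=\,0$ the Yang--Mills--Higgs equation for $(E_G,\,0)$ is exactly the Hermitian--Einstein equation for $E_G$, so $E_K$ being a solution means $E_G$ is polystable as a principal $G$--bundle and hence defines a point of $\mathcal M(G)$; moreover $(E_G,\,0)\,=\,\xi(E_G)$. Thus $R_0\,=\,\xi\circ p$, where $p:\mathcal M_H(G)\to\mathcal M(G)$ is the forgetful map $[(E_G,\,\theta)]\mapsto[E_G]$, so $R_0$ takes values in the image of $\xi$; here one notes that the vanishing of the rational characteristic classes is preserved because they depend only on $E_G$. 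Finally, $R_1$ is the identity, and for a point of the image of $\xi$, necessarily of the form $(E_G,\,0)$, one has $R_\lambda(E_G,\,0)\,=\,(E_G,\,0)$ for all $\lambda$. Restricting $R$ to the real interval $[0,1]$ therefore yields the required (strong) deformation retraction: $R_1$ is the identity, $R_0$ retracts $\mathcal M_H(G)$ onto the image of $\xi$, and every point of that image is fixed throughout.

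The main obstacle is precisely the behaviour at the central fibre. A priori, letting $\lambda\to 0$ in Simpson's moduli space produces the associated graded of a $\theta$--compatible filtration rather than $(E_G,\,0)$, so the underlying bundle can jump and the limit need not lie in the image of $\xi$. What rescues the argument --- and where the Calabi--Yau hypothesis is used --- is Theorem \ref{thm1}: because $\widetilde\beta(TX)\subseteq\{\widehat\Theta_x\}_{x\in X}$ for $\beta=\lambda\theta$, a single reduction $E_K$ solves the Yang--Mills--Higgs equation along the entire orbit, so no jumping occurs and the limit is honestly $(E_G,\,0)$. I would therefore take care to verify, first, that $R$ is jointly holomorphic rather than merely holomorphic in $\lambda$ for a fixed bundle (which I expect to reduce to the universal property of the moduli space applied to the rescaling family over $\mathbb C\times\mathcal M_H(G)$), and second, that $p$ genuinely lands in $\mathcal M(G)$, which the polystability of $(E_G,\,0)$ furnishes directly.
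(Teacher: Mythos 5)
Your proposal is correct and follows essentially the same route as the paper: the paper's proof likewise represents points of ${\mathcal M}_H(G)$ by polystable Higgs $G$--bundles, invokes Theorem \ref{thm1} to conclude that $(E_G,\,t\cdot\theta)$ remains polystable for all $t\in\mathbb C$ (so the underlying bundle does not jump at $t=0$), and takes the resulting scaling map $F(t,\,(E_G,\,\theta))=(E_G,\,t\cdot\theta)$ as the retraction, with $F_1$ the identity, $F_0$ landing in the image of $\xi$, and $\xi({\mathcal M}(G))$ fixed. Your additional remarks on well--definedness and joint holomorphicity are elaborations of points the paper leaves implicit.
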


\begin{proof}
Points of ${\mathcal M}_H(G)$ parametrize the polystable Higgs $G$--bundles
$(E_G,\, \theta)$ on $X$ such that all   rational characteristic classes of
$E_G$ of positive degree vanish. Given such a Higgs $G$--bundle $(E_G,\, \theta)$, from
Theorem \ref{thm1} we know that $(E_G,\, t\cdot \theta)$ is polystable for
all $t\, \in\, \mathbb C$. Therefore, we have a holomorphic map
$$
F\, :\, {\mathbb C}\times {\mathcal M}_H(G)\, \longrightarrow\, {\mathcal M}_H(G)\, ,
\ \ (t,\, (E_G,\, \theta))\, \longmapsto\, (E_G,\, t\cdot \theta)\, .
$$
The restriction of $F$ to $\{1\}\times {\mathcal M}_H(G)$ is the identity map
of ${\mathcal M}_H(G)$, while the image of the restriction of $F$ to
$\{0\}\times {\mathcal M}_H(G)$ is the image of $\xi$. Moreover,
the restriction of $F$ to $\{0\}\times \xi ({\mathcal M}(G))$ is the identity map.
\end{proof}

Fix a point $x_0\, \in\, X$. Since $G$ is an affine variety and $\pi_1(X,\, x_0)$
is finitely presented, the geometric invariant theoretic quotient
$${\mathcal M}_R(G)\,:=\, \text{Hom}(\pi_1(X,\, x_0),\, G)/\!\!/ G$$
for the adjoint action of $G$ on $\text{Hom}(\pi_1(X,\, x_0),\, G)$ is an affine variety.
The points of ${\mathcal M}_R(G)$ parameterize the equivalence classes of homomorphisms from
$\pi_1(X,\, x_0)$ to $G$ such that the Zariski closure of the image is a reductive subgroup
of $G$. Consider the quotient space
$${\mathcal M}_R(K)\,:=\, \text{Hom}(\pi_1(X,\, x_0),\, K)/K\, ,$$
where $K$ as before is a maximal compact subgroup of $G$. The inclusion of $K$ in $G$
produces an inclusion
\begin{equation}\label{xi2}
\xi'\, :\, {\mathcal M}_R(K)\, \longrightarrow\, {\mathcal M}_R(G)\, .
\end{equation}

\begin{corollary}\label{prop4}
There is a natural deformation retraction of ${\mathcal M}_R(G)$ to
the subset ${\mathcal M}_R(K)$ in \eqref{xi2}.
\end{corollary}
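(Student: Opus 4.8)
The plan is to deduce this from Proposition \ref{prop3} by transporting the deformation retraction constructed there through the nonabelian Hodge correspondence. First I would invoke the Corlette--Simpson correspondence for the reductive group $G$ on the compact K\"ahler manifold $X$ \cite{Si}, which furnishes a homeomorphism
$$
\Phi\, :\, {\mathcal M}_R(G)\, \stackrel{\sim}{\longrightarrow}\, {\mathcal M}_H(G)
$$
between the character variety of reductive representations $\pi_1(X,\, x_0)\,\to\, G$ and the moduli space of polystable Higgs $G$--bundles with vanishing rational Chern classes (the flat bundle attached to a representation has vanishing rational Chern classes, so the two Chern--class conditions match). Under $\Phi$ a representation takes values, up to conjugation, in the maximal compact subgroup $K$ if and only if the Higgs field of the corresponding Higgs $G$--bundle vanishes; this identifies ${\mathcal M}_R(K)$ with the locus of Higgs $G$--bundles of the form $(E_G,\, 0)$, that is, with the image of $\xi$ in \eqref{xi}. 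Thus $\Phi$ restricts to a homeomorphism ${\mathcal M}_R(K)\,\stackrel{\sim}{\to}\,\xi({\mathcal M}(G))$ compatible with the inclusions $\xi$ and $\xi'$.

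Next I would pull back the retraction of Proposition \ref{prop3}. Restricting the holomorphic family $F$ there to the real segment $[0,1]\,\subset\,\mathbb C$ gives a strong deformation retraction of ${\mathcal M}_H(G)$ onto $\xi({\mathcal M}(G))$, acting on objects by scaling the Higgs field, $(E_G,\, \theta)\,\mapsto\,(E_G,\, t\theta)$. Setting
$$
H\, :\, [0,1]\times {\mathcal M}_R(G)\, \longrightarrow\, {\mathcal M}_R(G)\, ,\qquad
H(t,\, \rho)\, :=\, \Phi^{-1}\bigl(F(t,\, \Phi(\rho))\bigr)\, ,
$$
produces a continuous map with $H(1,\, \cdot)\,=\,\mathrm{id}$ and with the image of $H(0,\, \cdot)$ equal to $\xi'({\mathcal M}_R(K))$, since $F(0,\, \cdot)$ has image $\xi({\mathcal M}(G))$ and $\Phi$ intertwines the two inclusions. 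For $\rho\,\in\,{\mathcal M}_R(K)$ the point $\Phi(\rho)$ has vanishing Higgs field, so $F(t,\, \Phi(\rho))\,=\,\Phi(\rho)$ is constant in $t$ and hence $H(t,\, \rho)\,=\,\rho$; thus $H$ fixes ${\mathcal M}_R(K)$ pointwise throughout, making it the desired strong deformation retraction.

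The main obstacle will be the careful invocation and bookkeeping of the nonabelian Hodge correspondence for a general reductive group on a higher--dimensional compact K\"ahler manifold, together with the precise matching of the compact (unitary) locus with the zero--Higgs--field locus under the vanishing--Chern--class hypothesis; everything else follows formally. Once $\Phi$ is in place and shown to commute with $\xi$ and $\xi'$, the homotopy $H$ is merely the transport of $F$, and its continuity and retraction properties are inherited from those of $F$. Note that, since the Corlette--Simpson map is only a homeomorphism and not holomorphic, the retraction $H$ is merely continuous---consistent with the present statement, which, unlike Proposition \ref{prop3}, does not assert holomorphicity.
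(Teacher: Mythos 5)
Your proposal is correct and follows essentially the same route as the paper: invoke the nonabelian Hodge (Corlette--Simpson) homeomorphism ${\mathcal M}_R(G)\cong{\mathcal M}_H(G)$, identify ${\mathcal M}_R(K)$ with ${\mathcal M}(G)$ (equivalently the zero--Higgs--field locus $\xi({\mathcal M}(G))$) compatibly with the inclusions $\xi$ and $\xi'$, and transport the retraction of Proposition \ref{prop3}. The paper states this via a commutative square and leaves the transport implicit, whereas you spell out the homotopy $H$; the content is the same.
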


\begin{proof}
The nonabelian Hodge theory gives a homeomorphism of ${\mathcal M}_R(G)$ with
${\mathcal M}_H(G)$. On the other hand, ${\mathcal M}_R(K)$ is identified
with ${\mathcal M}(G)$, and the following diagram is commutative:
$$
\begin{matrix}
{\mathcal M}(G) & \stackrel{\xi}{\longrightarrow} & {\mathcal M}_H(G)\\
~\Big\downarrow \sim && ~\Big\downarrow\sim \\
{\mathcal M}_R(K)& \stackrel{\xi'}{\longrightarrow} & {\mathcal M}_R(G)
\end{matrix}
$$
Hence Proposition \ref{prop3} produces the deformation retraction in question.
\end{proof}

\section{Pullback of Higgs bundles by finite morphisms}\label{se4}

Take $(X,\, \omega)$ to be as before. Let $M$ be compact connected K\"ahler manifold,
and let
$$
f\, :\, M\, \longrightarrow\, X
$$
be a surjective holomorphic map such that each fiber of $f$ is a finite subset of $M$.
In particular, we have $\dim M\,=\, \dim X$. It is known that the form
$f^*\omega$ represents a K\"ahler class on the
K\"ahler manifold $M$ \cite[p. 438, Lemma 2.1]{BiSu}.
The degree of torsion-free coherent analytic sheaves on $M$ will be defined using
the K\"ahler class given by $f^*\omega$.

\begin{proposition}\label{prop1}
Let $(E_G,\, \theta)$ be a Higgs $G$--bundle on $X$ such that the pulled back
Higgs $G$--bundle $(f^*E_G,\, f^*\theta)$ on $M$ is semistable. Then the principal
$G$--bundle $f^*E_G$ is semistable.
\end{proposition}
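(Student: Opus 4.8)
The plan is to run, directly on $M$, the same ``canonical destabilizing reduction is automatically Higgs--invariant'' argument that underlies the Calabi--Yau case (Lemma~\ref{lem2}). The point is that although $M$ itself need not be Calabi--Yau --- ramification of $f$ changes $c_1$, so we cannot apply the Ricci--flat argument on $M$ directly --- the pulled back Higgs field $f^*\theta$ is a section of $\text{ad}(f^*E_G)\otimes f^*\Omega_X$, and it is the bundle $f^*\Omega_X$, not $\Omega_M$, that controls everything. So I would first record the two properties of $f^*\Omega_X$ that are needed. Since $c_1(TX)=0$ we have $\text{degree}(\Omega_X)=0$, hence
\begin{equation*}
\text{degree}(f^*\Omega_X)\,=\,(\deg f)\cdot\text{degree}(\Omega_X)\,=\,0 ,
\end{equation*}
the positive factor $\deg f$ coming from $\int_M f^*\alpha=(\deg f)\int_X\alpha$. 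Moreover, since $\omega$ is Ricci--flat, $TX$ (hence $\Omega_X$) is Hermitian--Einstein of degree zero, so $\Omega_X$ is polystable; as finite pullback preserves semistability, $f^*\Omega_X$ is semistable, whence $\mu_{\max}(f^*\Omega_X)\le 0$. Finally the canonical map $f^*\Omega_X\hookrightarrow\Omega_M$ is injective (its kernel would be a torsion subsheaf of a locally free sheaf), so any $f^*\Omega_X$--valued, $f^*\theta$--invariant subsheaf is a genuine Higgs subsheaf on $M$.

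The second step is the destabilization argument. Suppose first $G=\text{GL}(n,\mathbb C)$, so $f^*E_G$ is a vector bundle, and assume it is not semistable. Let $S\subset f^*E_G$ be its maximal destabilizing subsheaf, which is saturated, semistable and canonical, with $\mu_{\min}(S)=\mu(S)>\mu_{\max}(f^*E_G/S)$. Composing the inclusion with $f^*\theta$ and the projection gives a map $S\to (f^*E_G/S)\otimes f^*\Omega_X$ whose target satisfies
\begin{equation*}
\mu_{\max}\bigl((f^*E_G/S)\otimes f^*\Omega_X\bigr)\,\le\,\mu_{\max}(f^*E_G/S)+\mu_{\max}(f^*\Omega_X)\,<\,\mu(S).
\end{equation*}
A nonzero homomorphism from the semistable sheaf $S$ to a sheaf all of whose slopes are $<\mu(S)$ cannot exist, so the map vanishes; that is, $f^*\theta(S)\subset S\otimes f^*\Omega_X$. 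Thus $S$ is a $\theta$--invariant, hence Higgs, subsheaf with $\mu(S)>\mu(f^*E_G)$, contradicting the semistability of $(f^*E_G,\,f^*\theta)$. Hence $f^*E_G$ is semistable. For a general reductive $G$ one replaces $S$ by the canonical reduction $E_P\subset f^*E_G\vert_U$ to a parabolic $P$ over the complement of a codimension $\ge 2$ subset, together with a strictly anti--dominant $\chi$ witnessing $\text{degree}(E_P\times^\chi\mathbb C)<0$; the same slope estimate, applied to the filtration of $\text{ad}(f^*E_G)$ induced by $E_P$, forces $f^*\theta\vert_U$ to be a section of $\text{ad}(E_P)\otimes f^*\Omega_X$, so $(U,P,\chi,E_P)$ destabilizes the Higgs $G$--bundle, again a contradiction.

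The main obstacle is the principal--bundle bookkeeping in this last step: passing from the clean ``$\mu_{\min}(\mathrm{sub})>\mu_{\max}(\mathrm{quotient})$ forces $\theta$--invariance'' statement for vector bundles to the statement that the canonical reduction of $f^*E_G$ is preserved by $f^*\theta$, phrased through strictly anti--dominant characters. This requires the canonical reduction of a non--semistable principal bundle (extended over the codimension $\ge 2$ locus where it is merely a subsheaf reduction), together with the fact that the adjoint bundle inherits from it a filtration with the requisite slope gaps, so that the negative part cannot receive a nonzero image of $f^*\theta$. A secondary input, which I would isolate as a lemma, is the semistability of $f^*\Omega_X$, i.e.\ that a finite surjective morphism preserves semistability in the compact K\"ahler setting; on the unramified locus this can alternatively be seen from the pulled back Ricci--flat Hermitian--Einstein metric, the ramification divisor being negligible for the degree computation.
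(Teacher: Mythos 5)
Your argument is correct, but it takes a genuinely different route from the paper's. The paper never touches the Harder--Narasimhan data on $M$ at all: it first \emph{descends} semistability (a destabilizing reduction of $(E_G,\,\theta)$ on $X$ would pull back to one of $(f^*E_G,\, f^*\theta)$, since degrees get multiplied by $\deg f>0$), then invokes the already-known Calabi--Yau statement on $X$ that a semistable Higgs $G$--bundle has semistable underlying principal bundle \cite[p.~305, Lemma~6.2]{Bi}, and finally quotes \cite[p.~441, Theorem~2.4]{BiSu} to pull semistability of the plain bundle $E_G$ back up to $M$. You instead run the slope argument directly on $M$, replacing $\Omega_X$ by $f^*\Omega_X$ and observing that this coefficient bundle is still semistable of degree zero for $f^*\omega$; the vector-bundle case of your argument is essentially the paper's proof of Lemma \ref{lem2} transplanted to $M$, and the general-$G$ case is exactly the canonical-reduction argument that the paper outsources to \cite{Bi}. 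What your route buys is self-containedness on $M$ and a visibly more general statement: you only use that the bundle in which the Higgs field takes values is semistable of degree zero, not that $M$ maps to a Calabi--Yau. What the paper's route buys is that all the principal-bundle bookkeeping you correctly identify as the main obstacle --- existence of the canonical reduction over the complement of a codimension-two subset of a compact K\"ahler manifold, the slope gap in the induced filtration of ${\rm ad}(f^*E_G)$, and the negativity of strictly anti-dominant characters on it --- is absorbed into two citations rather than redone. Note also that your auxiliary lemma on the semistability of $f^*\Omega_X$ is itself an instance of \cite[p.~441, Theorem~2.4]{BiSu}, i.e.\ the same input the paper applies to $E_G$, so the two proofs ultimately rest on the same external results assembled in a different order.
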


\begin{proof}
Since the pulled back
Higgs $G$--bundle $(f^*E_G,\, f^*\theta)$ is semistable, it follows that
$(E_G,\, \theta)$ is semistable. Indeed, the pullback of any reduction of
structure group of $(E_G,\, \theta)$ that contradicts the semistability condition
also contradicts the semistability condition for $(f^*E_G,\, f^*\theta)$.
Since the Higgs $G$--bundle $(E_G,\, \theta)$ is semistable, we conclude that the principal $G$--bundle
$E_G$ is semistable \cite[p. 305, Lemma 6.2]{Bi}. This, in turn, implies that $f^*E_G$ is
semistable (see \cite[p. 441, Theorem 2.4]{BiSu} and \cite[p. 442, Remark 2.5]{BiSu}).
\end{proof}

\begin{proposition}\label{prop2}
Let $(E_G,\, \theta)$ be a Higgs $G$--bundle on $X$ such that the pulled back
Higgs $G$--bundle $(f^*E_G,\, f^*\theta)$ on $M$ is stable. Then the principal
$G$--bundle $f^*E_G$ is polystable.
\end{proposition}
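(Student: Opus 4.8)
The plan is to descend to the Calabi--Yau base $X$, where Ricci--flatness and the results of Section 3 are available, conclude there that the underlying principal bundle $E_G$ is polystable, and then transport polystability back up to $M$ along $f$.

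First I would show that $(E_G,\,\theta)$ is itself stable on $X$. Any quadruple $(U,\,P,\,\chi,\,E_P)$ testing stability of $(E_G,\,\theta)$ pulls back to a quadruple $(f^{-1}(U),\,P,\,\chi,\,f^*E_P)$ for $(f^*E_G,\,f^*\theta)$: since $f$ is finite, $f^{-1}(X\setminus U)$ still has complex codimension at least two, and $f^*\theta$ is a section of $\text{ad}(f^*E_P)\otimes\Omega_M$ whenever $\theta\vert_U$ is a section of $\text{ad}(E_P)\otimes\Omega_U$. Because $f$ is surjective with finite fibres, $\text{degree}_{f^*\omega}(f^*L)=(\deg f)\cdot\text{degree}_\omega(L)$ for every line bundle $L$ on $X$, with $\deg f>0$ (this is the mechanism behind \cite[Lemma 2.1, Theorem 2.4]{BiSu}). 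Applying this to $L=E_P\times^\chi\mathbb C$ and using $f^*E_P\times^\chi\mathbb C=f^*(E_P\times^\chi\mathbb C)$, the inequality $\text{degree}(f^*E_P\times^\chi\mathbb C)>0$ coming from stability of the pullback forces $\text{degree}(E_P\times^\chi\mathbb C)>0$. As this holds for every quadruple, $(E_G,\,\theta)$ is stable, hence polystable.

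Next I would use the hypothesis that $X$ is Calabi--Yau. Being polystable, $(E_G,\,\theta)$ admits a Hermitian structure $E_K\subset E_G$ solving the Yang--Mills--Higgs equation \eqref{ymh} \cite[Theorem 4.6]{BS}. Since $\omega$ is Ricci--flat, Theorem 4.2 of \cite{BBGL} guarantees that this same $E_K$ solves \eqref{ymh} for the trivial Higgs field $(E_G,\,0)$ as well; equivalently $\theta\bigwedge\theta^*=0$ and the Chern connection of $E_K$ is Hermitian--Einstein. In particular the principal $G$--bundle $E_G$ is polystable on $X$.

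Finally I would pull this reduction back, and I expect this to be the main obstacle. The curvature of the Chern connection of $f^*E_K\subset f^*E_G$ is $f^*\mathcal K$, and the Hermitian--Einstein identity, written as $\mathcal K\bigwedge\omega^{d-1}=c\,\mathfrak z\,\omega^{d}$ for a universal constant $c=c(d)$, pulls back to $f^*\mathcal K\bigwedge(f^*\omega)^{d-1}=c\,\mathfrak z\,(f^*\omega)^{d}$ on $M$. On the dense open subset where $f$ is unramified, $f^*\omega$ is a genuine K\"ahler form and this identity reads $\Lambda_{f^*\omega}(f^*\mathcal K)=\mathfrak z$, so $f^*E_K$ is a Hermitian--Einstein reduction of $f^*E_G$ with respect to $f^*\omega$ and $f^*E_G$ is polystable. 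The delicate point is that $f^*\omega$ degenerates along the ramification locus of $f$: while the degree formulation of the Hermitian--Einstein condition pulls back without difficulty, one must ensure that polystability---and not merely the semistability already obtained in Proposition \ref{prop1}---genuinely survives, which is the polystable analogue of the semistability transfer and is exactly the role of \cite[Theorem 2.4, Remark 2.5]{BiSu}.
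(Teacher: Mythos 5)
Your proof follows essentially the same route as the paper's: descend stability of the pullback to stability of $(E_G,\,\theta)$ on $X$, deduce polystability of the underlying principal bundle $E_G$ on the Ricci--flat base (the paper simply cites \cite[p.~306, Lemma~6.4]{Bi} for this step, where you re-derive it via the Yang--Mills--Higgs equation and Theorem~4.2 of \cite{BBGL}), and then transport polystability up along the finite map using Biswas--Subramanian. The only quibble is your closing citation: the polystability transfer under finite pullback is \cite[Proposition~2.3, Remark~2.6]{BiSu}, whereas Theorem~2.4 and Remark~2.5 of that paper are the semistability statements used in Proposition~\ref{prop1}.
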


\begin{proof}
The principal $G$--Higgs bundle $(E_G,\, \theta)$ is stable, because any reduction of it 
contradicting the stability condition pulls back to a reduction that contradicts the 
stability condition for $(f^*E_G,\, f^*\theta)$. Since $(E_G,\, \theta)$ is stable, we 
know that $E_G$ is polystable \cite[p. 306, Lemma 6.4]{Bi}. Now $f^*E_G$ is polystable 
because $E_G$ is so \cite[p.~439, Proposition~2.3]{BS}, \cite[p.~442, Remark~2.6]{BS}.
\end{proof}

\section{Co--Higgs bundles}

We recall the definition of a co--Higgs vector bundle \cite{Ra1,Ra2,Hi}.

Let $(X,\,\omega)$ be a compact connected K\"ahler manifold and $E$ a holomorphic vector bundle on
$X$. A \textit{co--Higgs field} on $E$ is a holomorphic section
$$
\theta \, \in\, H^0(X,\, \text{End}(E)\otimes TX)
$$
such that the section $\theta\bigwedge\theta$ of $\text{End}(E)\otimes \bigwedge^2 TX$ vanishes
identically. A co--Higgs bundle on $X$ is a pair $(E, \, \theta)$, where $E$ is a holomorphic
vector bundle on $X$ and $\theta$ is a co--Higgs field on $E$ \cite{Ra1,Ra2,Hi}.

A co--Higgs bundle $(E, \, \theta)$ is called \textit{semistable} if for all nonzero
coherent analytic subsheaves $F\, \subset\, E$ with $\theta(F)\, \subset\, F\otimes TX$, the
inequality
$$
\mu(F)\, :=\, \frac{\text{degree}(F)}{\text{rank}(F)}\, \leq\,
\frac{\text{degree}(E)}{\text{rank}(E)}\, :=\, \mu(E)
$$
holds.

\subsection{Co--Higgs bundles on Calabi--Yau manifolds}

In this subsection we assume that $c_1(TX)\,\in\, H^2(X, \, {\mathbb Q})$ is zero, and
the K\"ahler form $\omega$ on $X$ is Ricci--flat. Take a holomorphic vector bundle
$E$ on $X$.

\begin{lemma}\label{lem2}
Let $\theta$ be a Higgs field or a co--Higgs field on $E$ such that $(E,\, \theta)$ is
semistable. Then the vector bundle $E$ is semistable.
\end{lemma}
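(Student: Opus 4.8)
The plan is to argue by contradiction: if $E$ were not semistable, then the maximal destabilizing subsheaf furnished by the Harder--Narasimhan filtration of $E$ would automatically be preserved by $\theta$, and this would contradict the semistability of the pair $(E,\theta)$. Since torsion-free coherent sheaves on a compact K\"ahler manifold admit a Harder--Narasimhan filtration with respect to the degree defined by $\omega$, I would let $F\subset E$ be the maximal destabilizing subsheaf; thus $F$ is semistable, $\mu(F)=\mu_{\max}(E)>\mu(E)$, and the quotient satisfies $\mu_{\max}(E/F)<\mu(F)$.

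The crucial geometric input is that, because $\omega$ is Ricci--flat, the Levi--Civita connection endows $TX$ (and hence $\Omega_X=TX^{*}$) with a Hermitian--Einstein structure of vanishing mean curvature. Consequently $TX$ and $\Omega_X$ are polystable of degree zero, so in particular $\mu_{\max}(TX)=\mu_{\max}(\Omega_X)=0$. Combined with the slope estimate for tensor products, $\mu_{\max}(A\otimes B)\le\mu_{\max}(A)+\mu_{\max}(B)$, this yields in the Higgs case $\mu_{\max}\bigl((E/F)\otimes\Omega_X\bigr)\le\mu_{\max}(E/F)<\mu(F)$, and identically with $\Omega_X$ replaced by $TX$ in the co--Higgs case.

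With these bounds in hand I would consider the composite $\bar\psi\colon F\hookrightarrow E\xrightarrow{\theta}E\otimes\Omega_X\twoheadrightarrow (E/F)\otimes\Omega_X$ (respectively with $TX$ in place of $\Omega_X$). Its image $I$ is a quotient of the semistable sheaf $F$, so $\mu_{\min}(I)\ge\mu(F)$; but $I$ is also a subsheaf of $(E/F)\otimes\Omega_X$, whence $\mu_{\max}(I)\le\mu_{\max}\bigl((E/F)\otimes\Omega_X\bigr)<\mu(F)$. Since $\mu_{\min}(I)\le\mu_{\max}(I)$, this forces $I=0$, that is, $\theta(F)\subset F\otimes\Omega_X$ (respectively $F\otimes TX$), so $F$ is a $\theta$--invariant subsheaf. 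The definition of semistability of $(E,\theta)$ then gives $\mu(F)\le\mu(E)$, contradicting $\mu(F)>\mu(E)$; hence $E$ is semistable.

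The step I expect to be the main obstacle is justifying the two ingredients that make the slope comparison work over a general compact K\"ahler (rather than projective) manifold: the polystability of $TX$ and $\Omega_X$ coming from Ricci--flatness, and the tensor-product slope inequality $\mu_{\max}(A\otimes B)\le\mu_{\max}(A)+\mu_{\max}(B)$. Both rest on the K\"ahler form of the Hermite--Einstein/Kobayashi--Hitchin correspondence and on the fact that tensor products of approximately Hermite--Einstein sheaves remain approximately Hermite--Einstein; once these are in place, the remainder is purely formal slope bookkeeping.
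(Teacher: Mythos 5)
Your proposal is correct and follows essentially the same route as the paper: assume $E$ unstable, take the maximal destabilizing subsheaf $F$ from the Harder--Narasimhan filtration, use polystability of $TX$ (hence $\Omega_X$) of degree zero coming from Ricci--flatness together with the behaviour of semistability under tensor product to bound $\mu_{\max}\bigl((E/F)\otimes\Omega_X\bigr)$ (resp.\ with $TX$), conclude that the induced map $F\to (E/F)\otimes\Omega_X$ vanishes so that $F$ is $\theta$--invariant, and derive a contradiction with the semistability of $(E,\theta)$. The only cosmetic difference is that the paper identifies the maximal destabilizing subsheaf of $(E/F)\otimes TX$ explicitly as $F_1\otimes TX$ using the Anchouche--Biswas tensor-product theorem, whereas you invoke the equivalent inequality $\mu_{\max}(A\otimes B)\le\mu_{\max}(A)+\mu_{\max}(B)$.
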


\begin{proof}
Let $\theta$ be a co--Higgs field on $E$ such that the co--Higgs bundle $(E,\, \theta)$
is semistable. Assume that $E$ is not semistable. Let $F$ be the maximal semistable
subsheaf of $E$, in other words, $F$ is the first term in the Harder--Narasimhan filtration
of $E$. The maximal semistable
subsheaf of $E/F$ will be denoted by $F_1$, so $\mu_{\rm max}(E/F)\,=\, \mu(F_1)$. Note that
we have \begin{equation}\label{m1}
\mu(F)\, >\, \mu(F_1) \,=\, \mu_{\rm max}(E/F)\, .
\end{equation}

Since $\omega$ is Ricci--flat we know that $TX$ is polystable. The tensor product of a
semistable sheaf and a semistable vector bundle is semistable \cite[p.~212, Lemma~2.7]{AB}.
Therefore, the maximal semistable subsheaf of $(E/F)\otimes TX$ is
$$F_1\otimes TX\, \subset\, (E/F)\otimes TX\, .$$ Now,
$$
\mu(F_1\otimes TX)\,=\, \mu(F_1)
$$
because $c_1(TX)\,=\, 0$. Hence from \eqref{m1} it follows that
\begin{equation}\label{eco}
\mu(F)\, >\, \mu(F_1\otimes TX)\,=\, \mu_{\rm max}((E/F)\otimes TX)\, .
\end{equation}

Let
$$
q\, :\, E\, \longrightarrow\, E/F
$$
be the quotient homomorphism. From \eqref{eco} it follows that
there is no nonzero homomorphism from $E$ to $(E/F)\otimes TX$. In particular,
the composition
$$
F\, \hookrightarrow\, E \, \stackrel{\theta}{\longrightarrow}\, E\otimes TX
\, \stackrel{q\otimes {\rm Id}}{\longrightarrow}\, (E/F)\otimes TX
$$
vanishes identically. This immediately implies that $\theta(F)\, \subset\, F\otimes TX$.
Therefore, the co--Higgs subsheaf $(F,\, \theta\vert_F)$ of $(E,\, \theta)$ violates the inequality
in the definition of semistability. But this
contradicts the given condition that $(E,\, \theta)$ is semistable. Hence we conclude
that $E$ is semistable.

Note that $\Omega_X$ is polystable because $TX$ is polystable. Hence the above proof also works
when the co-Higgs field $\theta$ is replaced by a Higgs field.
\end{proof}

A particular case of this result was shown in \cite{Ra2} for $X$ a K3 surface. Moreover,
a result implying this Lemma was proved in \cite{BH}.

\subsection{A characterization of Calabi--Yau manifolds}

\begin{theorem}\label{prop5}
Let $X$ be a compact connected K\"ahler manifold such that for every K\"ahler class $[\omega]
\,\in\, H^2(X,\, {\mathbb R})$ on it the following two hold:
\begin{enumerate}
\item the tangent bundle $TX$ is semistable, and

\item for every semistable Higgs or co--Higgs bundle $(E,\,\theta)$ on $X$,
the underlying holomorphic vector bundle $E$ is semistable.
\end{enumerate}
Then $c_1(TX)\,=\, 0$.
\end{theorem}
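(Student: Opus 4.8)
The plan is to reduce the statement to a numerical condition on $TX$ and then exploit hypothesis (2) by building an explicit (co-)Higgs structure. I fix an arbitrary K\"ahler class $[\omega]$ and aim to show that
$\deg_\omega(TX):=\int_X c_1(TX)\wedge\omega^{d-1}=0$, where $d=\dim_{\mathbb C}X$; once this holds for every K\"ahler class, the vanishing of $c_1(TX)$ in $H^2(X,\mathbb R)$, hence in $H^2(X,\mathbb Q)$, will follow by a polarization argument (last paragraph).

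The bundle I would test is $E=\mathcal O_X\oplus TX$, so that $\mu(E)=\deg_\omega(TX)/(d+1)$, while $\mu(\mathcal O_X)=0$ and $\mu(TX)=\deg_\omega(TX)/d$. As an ordinary vector bundle $E$ is therefore unstable as soon as $\deg_\omega(TX)\neq 0$: if $\deg_\omega(TX)>0$ the subsheaf $TX$ destabilizes, and if $\deg_\omega(TX)<0$ the subsheaf $\mathcal O_X$ does. The idea is to equip $E$ with a (co-)Higgs field that breaks the invariance of the destabilizing subsheaf, making $E$ semistable as a (co-)Higgs bundle; hypothesis (2) is then contradicted unless $\deg_\omega(TX)=0$. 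When $\deg_\omega(TX)>0$ I take the co-Higgs field $\theta\in H^0(\mathrm{End}(E)\otimes TX)$ whose only nonzero block is the canonical section $\mathrm{id}_{TX}\in H^0(\mathrm{Hom}(TX,\mathcal O_X)\otimes TX)=H^0(\mathrm{End}(TX))$, so that $\theta$ kills $\mathcal O_X$ and maps $TX$ isomorphically onto $\mathcal O_X\otimes TX$. When $\deg_\omega(TX)<0$ I dually take the Higgs field $\varphi\in H^0(\mathrm{End}(E)\otimes\Omega_X)$ with only nonzero block $\mathrm{id}_{TX}\in H^0(\mathrm{Hom}(\mathcal O_X,TX)\otimes\Omega_X)=H^0(\mathrm{End}(TX))$. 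In both cases the field is strictly off-diagonal, hence nilpotent, so $\theta\bigwedge\theta=0$ (resp.\ $\varphi\bigwedge\varphi=0$) is automatic and we genuinely obtain a (co-)Higgs bundle.

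The main work, and the step I expect to be the real obstacle, is to verify semistability of these (co-)Higgs bundles using only the semistability of $TX$ from hypothesis (1). In the case $\deg_\omega(TX)>0$, let $F\subset E$ be a nonzero $\theta$-invariant subsheaf; I analyze it through $F_1=F\cap\mathcal O_X$ and $G=\pi_{TX}(F)\subseteq TX$, with $0\to F_1\to F\to G\to 0$. The condition $\theta(F)\subseteq F\otimes TX$ forces, at the generic point, that either $G=0$ (so $F\subseteq\mathcal O_X$ and $\mu(F)\leq 0\leq\mu(E)$) or $F$ contains the $\mathcal O_X$ summand generically. In the latter case $\deg(F)=\deg(F_1)+\deg(G)\leq 0+\mathrm{rank}(G)\,\mu(TX)$ by $\deg(F_1)\le 0$ and semistability of $TX$, and a direct computation using $\mathrm{rank}(G)\leq d$ gives $\mu(F)\leq\mu(E)$; hence $(E,\theta)$ is semistable. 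The case $\deg_\omega(TX)<0$ is symmetric: a $\varphi$-invariant $F$ with nonzero $\mathcal O_X$-component must contain $TX$ generically, hence has full rank and $\mu(F)\le\mu(E)$, while one with $F\subseteq TX$ satisfies $\mu(F)\le\mu(TX)=\deg_\omega(TX)/d\le\deg_\omega(TX)/(d+1)=\mu(E)$, the last inequality holding precisely because $\deg_\omega(TX)<0$. Applying hypothesis (2) to the semistable (co-)Higgs bundle then forces $E$ to be semistable as a vector bundle, contradicting the instability noted above; therefore $\deg_\omega(TX)=0$.

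Finally, knowing $\int_X c_1(TX)\wedge\omega^{d-1}=0$ for every K\"ahler class $\omega$, I conclude $c_1(TX)=0$. The map $\omega\mapsto\int_X c_1(TX)\wedge\omega^{d-1}$ is a homogeneous polynomial on the open K\"ahler cone in $H^{1,1}(X,\mathbb R)$; its vanishing there forces the associated symmetric multilinear form to vanish, so in particular $\int_X c_1(TX)\wedge\beta\wedge\omega_0^{d-2}=0$ for a fixed K\"ahler class $\omega_0$ and every $(1,1)$-class $\beta$. Since the Hodge--Riemann form $(\gamma,\beta)\mapsto\int_X\gamma\wedge\beta\wedge\omega_0^{d-2}$ is nondegenerate on $H^{1,1}(X,\mathbb R)$, this yields $c_1(TX)=0$, as required.
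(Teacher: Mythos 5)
Your proposal is correct and follows essentially the same route as the paper: the same test bundle $E={\mathcal O}_X\oplus TX$, the same off-diagonal co--Higgs field $\mathrm{Id}_{TX}\in H^0(\mathrm{Hom}(TX,{\mathcal O}_X)\otimes TX)$ for the case $\deg(TX)>0$ and Higgs field $\mathrm{Id}_{TX}\in H^0(\mathrm{Hom}({\mathcal O}_X,TX)\otimes\Omega_X)$ for $\deg(TX)<0$, the same dichotomy on how an invariant subsheaf meets the two summands, and the same conclusion from the vanishing of $\deg_\omega(TX)$ for all K\"ahler classes. Your two small variations --- using the exact sequence $0\to F_1\to F\to G\to 0$ rather than the paper's direct-sum decomposition of $F$, and invoking polarization plus Hodge--Riemann nondegeneracy in the last step --- are refinements of the same argument, not a different one.
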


\begin{proof}
We will show that $\text{degree}(TX)\,=\, 0$ for every K\"ahler class on $X$. For this, take
any K\"ahler class $[\omega]$.

First assume that $\text{degree}(TX)\,> \, 0$ with respect to $[\omega]$. We will
construct a co--Higgs field on the holomorphic vector bundle
\begin{equation}\label{E}
E\, :=\, {\mathcal O}_X\oplus TX\, .
\end{equation}
Since the vector bundle $\text{Hom}(TX,\, {\mathcal O}_X)\,=\, \Omega_X$ is a direct summand 
$\text{End}(E)$, we have
$$
\text{End}(TX)\,=\, \Omega_X\otimes TX\,=\, \text{Hom}(TX,\, {\mathcal O}_X)\otimes TX
\,\subset\, \text{End}(E)\otimes TX\, .
$$
Hence $\text{Id}_{TX}\, \in\, H^0(X,\, \text{End}(TX))$ is a co--Higgs field on $E$;
this co--Higgs field on $E$ will be denoted by $\theta$.

We will show that the co--Higgs bundle $(E,\, \theta)$ is semistable.

For show that, take any coherent analytic subsheaf $F\, \subset\, E$ such that $\theta(F)\,
\subset\, F\otimes TX$. First consider the case where
$$
F\bigcap (0,\, TX)\, =\, 0\, .
$$
Then the composition
$$
F\, \hookrightarrow\, E \,=\, {\mathcal O}_X\oplus TX\,\longrightarrow\, {\mathcal O}_X
$$
is injective. Hence
$$
\mu(F)\, \leq\, \mu({\mathcal O}_X)\,=\, 0\, <\, \mu(E)\, .
$$
Hence the co--Higgs subsheaf $(F,\, \theta\vert_F)$ of $(E,\, \theta)$ does not violate
the inequality condition for semistability.

Next assume that
$$
F\bigcap (0,\, TX)\, \not=\, 0\, .
$$
Now in view of the given condition that
$\theta(F)\, \subset\, F\otimes TX$, from the construction of the
co--Higgs field $\theta$ is follows immediately that
$$
F\bigcap ({\mathcal O}_X,\, 0)\, \not=\, 0\, .
$$
Hence we have
\begin{equation}\label{fmu}
F\, =\, (F\bigcap (0,\, TX))\oplus (F\bigcap ({\mathcal O}_X,\, 0))\, .
\end{equation}
Note that $$\mu(F\bigcap (0,\, TX))\, \leq\, \mu(TX)$$ because $TX$ is semistable, and
also we have $\mu(F\bigcap ({\mathcal O}_X,\, 0))\,\leq\, \mu({\mathcal O}_X)$. Therefore, from
\eqref{fmu} it follows that
$$
\mu(F)\, \leq\, \mu(E)\, .
$$
Hence again the co--Higgs subsheaf $(F,\, \theta\vert_F)$ of $(E,\, \theta)$ does not violate
the inequality condition for semistability. So $(E,\, \theta)$ is semistable.

Hence by the given condition, the holomorphic vector bundle $E$ is semistable. But this implies
that $\text{degree}(TX)\,=\, 0$. This contradicts the assumption that $\text{degree}(TX)\,> \, 0$.

Now assume that $\text{degree}(TX)\,< \, 0$. We will construct a Higgs field on the
vector bundle $E$ in \eqref{E}.

The vector bundle $\text{Hom}({\mathcal O}_X,\, TX)\,=\, TX$ is a direct summand
$\text{End}(E)$. Hence we have
$$
\text{End}(TX)\,=\, TX\otimes \Omega_X\,=\, \text{Hom}({\mathcal O}_X,\, TX)\otimes\Omega_X
\,\subset\, \text{End}(E)\otimes\Omega_X\, .
$$
Consequently, $\text{Id}_{TX}\, \in\, H^0(X,\, \text{End}(TX))$ is a Higgs field on $E$;
this Higgs field on $E$ will be denoted by $\theta'$.

We will show that the above Higgs vector bundle $(E,\, \theta)$ is semistable.

Take any coherent analytic subsheaf $$F\, \subset\, E$$ such that $\theta(F)\,
\subset\, F\otimes\Omega_X$ and  $\text{rank}(F)\, <\, \text{rank}(E)$.
First consider the case where
$$
F\bigcap ({\mathcal O}_X,\, 0)\, =\, 0\, .
$$
Then we have $F\, \subset\, (0,\, TX)\, \subset\, E$. Since $TX$ is semistable, we have
$$
\mu(F)\, \leq\, \mu(TX)\, .
$$
On the other hand, $\mu(TX)\, <\, \mu(E)$, because $\text{degree}(TX)\,< \, 0\,=\,
\mu({\mathcal O}_X)$. Combining these we get
$$
\mu(F)\, <\, \mu(E)\, ,
$$
and consequently, the Higgs subsheaf $(F,\, \theta\vert_F)$ of $(E,\, \theta)$ does not violate
the inequality condition for semistability.

Now assume that
$$
F\bigcap ({\mathcal O}_X,\, 0)\, \not=\, 0\, .
$$
Hence
\begin{equation}\label{r-i}
\text{rank}(F\bigcap ({\mathcal O}_X,\, 0))\,=\, 1\, ,
\end{equation}
because $F\bigcap ({\mathcal O}_X,\, 0)$ is a nonzero subsheaf of ${\mathcal O}_X$. 
Now from the construction of the Higgs field $\theta$ it follows that
$$
\text{rank}(F\bigcap (0,\, TX))\,=\, \text{rank}(TX)\, .
$$
Combining this with \eqref{r-i} we conclude that $\text{rank}(F)\, =\, \text{rank}(E)$.
This contradicts the assumption that $\text{rank}(F)\, <\, \text{rank}(E)$. Hence we
conclude that the Higgs vector bundle $(E,\, \theta)$ is semistable.

Now the given condition says that $E$ is semistable, which in turn implies
that $$\text{degree}(TX)\,=\, 0\, .$$ This contradicts the assumption that $\text{degree}(TX)\,< \, 0$.

Therefore, we conclude that $\text{degree}(TX)\,=\, 0$ for all K\"ahler classes $[\omega]$ on $X$.
In other words,
\begin{equation}\label{cp}
c_1(TX)\cup ([\omega])^{d-1}\,=\, 0
\end{equation}
for every K\"ahler class $[\omega]$ on $X$,
where $d$ as before is the complex dimension of $d$. But the $\mathbb R$--linear span of
$$
\{[\omega]^{d-1}\,\in\, H^{2d-2}(X,\, {\mathbb R})\, \mid\, [\omega]\ ~ \text{ K\"ahler class}\}
$$
is the full $H^{2d-2}(X,\, {\mathbb R})$. Therefore, from \eqref{cp} it follows that
$$
c_1(TX)\cup \delta\,=\, 0
$$
for all $\delta\, \in\, H^{2d-2}(X,\, {\mathbb R})$. Now from the Poincar\'e duality
it follows that $c_1(TX)\,\in\, H^{2}(X,\, {\mathbb R})$ vanishes.
\end{proof}


\vskip 1cm
\noindent
Indranil Biswas\\
School of Mathematics, Tata Institute of Fundamental
Research 
\\
Homi Bhabha Road, Mumbai 400005, India\\
indranil@math.tifr.res.in

\bigskip
\noindent
{Ugo Bruzzo}\\
{Scuola Internazionale Superiore di Studi Avanzati (SISSA)\\
 Via Bonomea 265, 34136 
Trieste, Italy
\\
 Istituto Nazionale di Fisica Nucleare, Sezione di Trieste}
\\
{bruzzo@sissa.it}

\bigskip
\noindent
{Beatriz Gra\~na Otero}
\\
{Departamento de Matem\'aticas, Pontificia Universidad Javeriana,
\\
Cra. 7$^{\rm ma}$ N$^{\rm o}$ 40-62, Bogot\'a, Colombia}
\\
{bgrana@javeriana.edu.co}

\bigskip
\noindent
{Alessio Lo Giudice}
\\
{Via Antonio Locatelli 20, 37122 Verona}
\\
{alessiologiudic@gmail.com}

\label{lastpage}
\end{document}